\newtheorem{theorem}{Theorem}
\newtheorem{proposition}{Proposition}
\newtheorem{lemma}{Lemma}
\newtheorem{remark}{Remark}
\def\sE{{\mathscr{E}}}
\def\sA{{\mathscr{A}}}
\def\supp{\hbox{supp}\, }
\definecolor{wineRed}{rgb}{0.7,0,0.3}
\newcommand{\bR}{\mathbb R}
\newcommand{\cL}{\mathcal L}
\title{Asymptotic behavior of solutions to a space fractional diffusion  equation}
\author[{1}]{Barbara \L upi\'{n}ska\footnote{bpietruczuk@math.uwb.edu.pl}}
\author[{2}]{Piotr Rybka{\footnote{rybka@mimuw.edu.pl, corresponding author, ORCID: 0000-0002-0694-8201}}}
\affil[{1}]
{\small Faculty of Computer Science, University of Białystok, ul.Świerkowa 20B, 15-328 Białystok,  PL}
\affil[{2}]
{\small Faculty of 
Mathematics, Informatics and Mechanics\\ University of Warsaw, ul. Banacha 2, 02-097 
Warsaw, PL}
\date{\today}
\begin{document}

\maketitle
\begin{abstract}
\noindent We improve the time decay estimates of solutions to the one-dimensional fractional diffusion equation involving the Caputo derivative. The equation is considered on the half-line. Depending on the boundary condition, we show that solutions converge in $L^p$, $p>1$ to a multiple of the self-similar solutions or decay to zero. The convergence  rate is provided.
\end{abstract}
\bigskip\noindent
{\bf Key words:} \quad Caputo derivative, space-fractional diffusion operator, fundamental solution, decay of solutions

\bigskip\noindent
{\bf 2020 Mathematics Subject Classification.} Primary: 35R11, Secondary: 35A08
\section{Introduction}
It is well-known that solutions to the heat equation on the line with an initial condition decaying at infinity converge as time goes to infinity to a multiple of the fundamental solution, (see \cite[Theorem 1.1.4]{GGS}),
\begin{equation}\label{r1}
\lim_{t\to \infty} t^{\frac12}\| u(\cdot , t) - m G_t \|_{L^\infty} = 0,
\end{equation}
where $m = \int_\bR u(x,0)\, dx $ and $G_t(x) =\frac 12 (\pi t)^{-\frac12} \exp\big(- \frac{|x|^2}{4t}\big)$. 
The proof offered in \cite{GGS} is based on the representation of solutions in terms of convolution with the fundamental solution, $G_t$. 

In the present paper we prove an analogue of (\ref{r1}) for the space fractional diffusion
\begin{equation}\label{eq1}
\begin{array}{ll}
\frac{\partial u}{\partial t} - \frac{\partial}{\partial x} D^\alpha_{xC}u=0,     &  (x,t)\in(0,\infty)^2,\\
 u(x,0)=g(x),    & x>0,
\end{array}
\end{equation}
where $g,\ yg(\cdot)\in L^1(0,\infty)$ and 
$D^\alpha_{xC}$ is the fractional Caputo derivative of order $\alpha\in(0,1)$ with respect to the spacial variable $x$, see Section 2 for the definition of the operator $D^\alpha_{xC}$.

Problem \eqref{eq1} has to be augmented with the boundary condition at $x=0$. We choose the Dirichlet data
\begin{equation}\label{rD}
u(0, t) = 0,\qquad t>0
\end{equation}
or the fixed slope condition,
\begin{equation}\label{rN}
u_x(0, t) = 0,\qquad t>0.
\end{equation}

Here is our main result. Let us suppose that $w$ is a solution to \eqref{eq1}--\eqref{rD}, then Theorem \ref{t1} (a) states that
$$
\sup_{t\ge 1} t^{\frac{2-1/p}{1+\alpha}} \| w(\cdot , t) \|_{L^p}<\infty
$$
for all $p\in (1, \infty].$ We see that unexpectedly the Dirichlet boundary condition leads to extinguishing the solution. However, if $v$ is a solution to \eqref{eq1}, \eqref{rN}, then its behavior is consistent with \eqref{r1}, because we can show in  Theorem \ref{t1} (b) that
$$
\sup_{t\ge 1} t^{\frac{2-1/p}{1+\alpha}} \| v(\cdot , t) - 2m \sE_t \|_{L^p}<\infty
$$
for all $p\in (1, \infty].$ We see that the case $p=1$ is excluded. In fact, we can only prove weak type $(1,1)$ estimates, see Lemma \ref{l5}.

In the above formula we set $m = \int_0^\infty g(y)\,dy$ and $\sE_t$ is the fundamental solution of (\ref{eq1}$_1$), see Section 2 for the definition of $\sE_t$ and its properties. 

Our method of proving these facts is based on the representation formulas of solutions in terms of the convolution of data with  the fundamental solution. These formulas were derived in \cite{NRS}, we will recall them in Section \ref{s4}. However, in order to make them work we have to establish estimates on the decay of the fundamental solution to \eqref{eq1}, $\sE_t,$ and its derivatives, see Section \ref{s3} for a series of Lemmas. These bounds are an improvement in comparison with \cite{NRS} and they give us hope of lifting the restriction of the compact support of $g$. However, we need the boundedness of $\supp g$ in the proof of Theorem \ref{t1}.

The organization of the paper is as follows. 
In Section 2 we recall the basic facts from the fractional calculus. Section 3 is devoted to the proof of estimates on $\sE_t$ and its derivatives. The representation formulas, see \eqref{rn3.5} and \eqref{rn3.6} are recalled in Section 4, where the main theorem is proved with the help of Marcinkiewicz Interpolation Theorem.

\section{Preliminaries}
\subsection{Fractional operators}
First of all, we  recall the notions of the  Caputo and Riemann-Liouville fractional derivatives. We begin with the definition of the Riemann-Liouville integral and the derivative of fractional order $\alpha\in (0,1)$. Here they are 
$$[I^{\alpha}_{a+}f](x)=\frac{1}{\Gamma(\alpha)}\int_a^x{\frac{f(\tau)d\tau}{(x-\tau)^{1-\alpha}}},$$
$$D^{\alpha}_{xRL}f(x)=\frac{d}{dx}[I^{1-\alpha}_{a+}f],$$
while the Caputo fractional derivative of order $\alpha$ has the form
$$D^{\alpha}_{xC}f(x)=[I^{1-\alpha}_{a+}f'](x)=\frac{1}{\Gamma(1-\alpha)}\int_a^x{\frac{f'(\tau)d\tau}{(x-\tau)^{\alpha}}}.$$
\begin{remark}[see \cite{Kilbas}]
The operator $I^{\alpha}_{a+}f$ is well defined for $f\in L^p(a,b),\ p\geq 1$ and $D^{\alpha}_{xRL}f,\ D^{\alpha}_{xC}f$ are well defined, for example, for an absolutely continuous functions. This approach is sufficient for our purposes.
\end{remark}

The following formula illustrates the relationship between the two types of derivatives for a general function $f\in AC[0,L]$,
$$D^{\alpha}_{xRL}f(x)=D^{\alpha}_{xC}f(x)+\frac{x^{-\alpha}}{\Gamma(1-\alpha)}f(0).$$
Let us notice that if $f(0)=0$, then 
$$D^{\alpha}_{xRL}f(x)=D^{\alpha}_{xC}f(x).$$
Moreover, if $\alpha\in (0,1)$, then  we have
$$D^{\alpha}_{xRL}I^\alpha _{a+}f(x)=f(x),$$
but
\begin{equation}\label{r-ID}
I^\alpha _{a+}D^{\alpha}_{xRL}f(x)=f(x)-f(a)    ,
\end{equation}
for absolutely continuous 
functions. 
For more details about fractional operators, we refer to~\cite{Kilbas}.

\subsection{The fundamental solution}
The authors of \cite{NRS} derived a formula for $\sE$, a self-similar solution to \eqref{eq1}. Namely, its definition follows
\begin{equation}
       \sE_t(x) = \sE(x,t)=a_0t^{-\frac{1}{1+\alpha}}E_{\alpha,1+1/\alpha,1/\alpha}\left(-\frac{x^{1+\alpha}}{(1+\alpha)t}\right),
   \end{equation}
where
$$
E_{\alpha, 1+1/\alpha, 1/\alpha} (z)= \sum_{n=0}^\infty  b_n z^n
$$
and
\begin{equation*}
    b_n = \prod_{i=0}^{n-1} \frac{\Gamma(\alpha i + i +2)}{\Gamma(\alpha(i+1) + i +2)}, \quad n>0, \qquad b_0=1.
\end{equation*}
In general, 
$E_{\alpha, \beta, \gamma}$ is the three parameter Mittag-Leffler function, see \cite[formula (1.9.19)]{Kilbas} for the definition. In particular, $E_{\alpha, \beta, \gamma}$ is an entire function.

The constant $a_0$ is determined by the condition
\begin{equation*}
\frac{1}{a_0}=2\int_0^\infty E_{\alpha,1+1/\alpha,1/\alpha}\left(-x^{1+\alpha}/(1+\alpha)\right)dx.
\end{equation*}  
Since $\sE$ plays the role of the fundamental solution and it is used to represent solutions, see Proposition \ref{p1} below, we collect here its main properties.  They are shown in \cite[Theorem 1]{NRS}:\\
\begin{proposition}\label{p0} Let us suppose $\sE$ is the fundamental solution defined above. Then:
\\(1) $\sE\in C^2(0,\infty)^2$;\\
    (2) $\sE$ is positive for all $x,\ t>0$;\\
    (3) for all $t>0$ function $\sE(\cdot,t)$ is decreasing;\\
    (4) for all $t>0$ function $\sE(\cdot,t)$ is in $L^1(0,\infty)$ and $\displaystyle{\int_0^\infty \sE(x,t)dx=\frac{1}{2}.}$ 
\end{proposition}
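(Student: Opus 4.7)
The plan is to work directly with the series definition and the self-similar structure of $\sE$, handling the four assertions in roughly increasing order of difficulty. For (1), since $E_{\alpha,1+1/\alpha,1/\alpha}$ is entire and the map $(x,t)\mapsto -x^{1+\alpha}/((1+\alpha)t)$ is smooth on $(0,\infty)^2$, their composition is $C^\infty$; multiplication by the smooth factor $a_0 t^{-1/(1+\alpha)}$ preserves smoothness, so in fact $\sE\in C^\infty((0,\infty)^2)$ and a fortiori $C^2$.

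Next I would dispose of (4). The self-similar substitution $\xi=x\,t^{-1/(1+\alpha)}$ yields
\[
\int_0^\infty \sE(x,t)\, dx = a_0 \int_0^\infty E_{\alpha,1+1/\alpha,1/\alpha}\!\left(-\tfrac{\xi^{1+\alpha}}{1+\alpha}\right) d\xi,
\]
and the right-hand side equals $\tfrac12$ by the very definition of $a_0$. One also needs to justify convergence of the improper integral; this follows from the algebraic decay of $E_{\alpha,1+1/\alpha,1/\alpha}(-s)$ as $s\to\infty$, a standard asymptotic for three-parameter Mittag-Leffler functions available in \cite{Kilbas}.

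The delicate claims are (2) and (3). Positivity is not visible from the series $\sum b_n(-\eta)^n$, whose terms alternate in sign, so I would pass to an integral representation: either a Mellin-Barnes contour integral for the three-parameter Mittag-Leffler function, or equivalently a Laplace-type representation expressing $\sE_t$ as a superposition of Gaussians with a nonnegative spectral density. An alternative path is a probabilistic subordination argument, identifying $\sE_t$ with the transition density of a suitable subordinated stable process. Once positivity is secured, monotonicity in $x$ follows from
\[
\partial_x \sE_t(x) = -\,a_0\, t^{-1-\frac{1}{1+\alpha}}\, x^\alpha\, E'_{\alpha,1+1/\alpha,1/\alpha}\!\left(-\tfrac{x^{1+\alpha}}{(1+\alpha)t}\right),
\]
because the derivative of $E_{\alpha,1+1/\alpha,1/\alpha}$ is representable as another three-parameter Mittag-Leffler function with shifted parameters, and its positivity at negative arguments is obtained by the same integral representation technique.

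The main obstacle is clearly the positivity assertion (2): the power series alone provides no sign information, and all subsequent properties — including monotonicity and the interpretation of $\sE$ as (half of) a probability density in (4) — rest on it. Thus any proof hinges on the choice of an integral or probabilistic representation, which is precisely the technical heart of \cite{NRS}.
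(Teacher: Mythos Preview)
The paper does not prove this proposition at all: it is quoted verbatim from \cite[Theorem 1]{NRS}, so there is no in-paper argument to compare against. Your treatment of (1) and (4) is correct and self-contained; the self-similar change of variables together with the definition of $a_0$ is exactly the right way to handle (4), and smoothness in (1) is immediate from the entirety of $E_{\alpha,1+1/\alpha,1/\alpha}$.

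For (2) and (3), however, what you have is a plan rather than a proof. You name Mellin--Barnes contours and probabilistic subordination as possible routes to positivity and then defer the execution to \cite{NRS}; that is a genuine gap, and in fact those particular tools are unlikely to be the right ones here. The operator $\partial_x D^\alpha_{xC}$ on the half-line is not a symmetric fractional Laplacian, so there is no obvious subordinated-stable interpretation of $\sE_t$, and extracting a sign from a Mellin--Barnes integral for a three-parameter Mittag--Leffler function is far from routine. The machinery that this paper actually imports from \cite{NRS} points to a different, ODE-based approach: Lemma~\ref{L_rown} records that the profile $\Phi$ solves $D^\alpha_{xC}\Phi=-\tfrac{1}{1+\alpha}x\Phi$, equivalently the Volterra equation $\Phi(x)=1-\tfrac{1}{\Gamma(\alpha)(1+\alpha)}\int_0^x s\Phi(s)(x-s)^{\alpha-1}\,ds$, and it is this identity (not a contour integral) that drives all the sign and decay information in Section~\ref{s3}. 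A clean argument for (2)--(3) should therefore go through the integral equation---for instance, a first-zero argument for positivity, and then the computation $\Phi'=-\tfrac{1}{1+\alpha}I^\alpha[(x\Phi)']$ (used in the proof of Lemma~\ref{l4}) for monotonicity---rather than through special-function representations whose positivity is itself nontrivial.
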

    
Let us set 
\begin{equation}\label{fi}
\Phi(x)= E_{\alpha,1+1/\alpha,1/\alpha}\left(-\frac{x^{1+\alpha}}{1+\alpha}\right),
\end{equation}
we will call it the profile function.
We can write $\sE$ shortly as follows
\begin{equation}\label{df-e}
\sE_t(x)=
a_0t^{-\frac{1}{1+\alpha}}\Phi\left(\frac{x}{t^{\frac{1}{1+\alpha}}}\right).
\end{equation}

What is important for the derivation of the properties of $\Phi$ is not the series defining the Mittag-Leffler function, but the differential equation $\Phi$ satisfied. 
We recall:
\begin{lemma}(see \cite[Proposition 1]{NRS})
\label{L_rown}\\
The function $\Phi$ defined in~\eqref{fi} satisfies the following fractional differential equation, when $\alpha\in (0,1)$
\begin{equation}\label{r2}
    D^\alpha_{xC}v(x)=-\frac{1}{1+\alpha} xv(x),\;\; x>0,\;\; v(0)=1.
\end{equation}\end{lemma}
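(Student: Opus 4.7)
My plan is to verify the two requirements separately. The initial condition $\Phi(0)=1$ is immediate: only the $n=0$ term of the defining series contributes at $x=0$, and $b_0=1$. The substantive content is the fractional ODE, which I would establish by a termwise comparison of power series, viewing $\Phi$ as the unique formal solution of a recurrence.

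Expand $\Phi(x)=\sum_{n=0}^\infty c_n x^{n(1+\alpha)}$ with $c_n=(-1)^n b_n/(1+\alpha)^n$. Since the three-parameter Mittag-Leffler function is entire, this series together with its termwise derivative converges absolutely and uniformly on every bounded subinterval of $[0,\infty)$. A dominated convergence argument in the Riemann--Liouville integral then allows termwise application of $D^\alpha_{xC}$. Using the elementary identity
$$
D^\alpha_{xC} x^p = \frac{\Gamma(p+1)}{\Gamma(p-\alpha+1)}\, x^{p-\alpha}\qquad (p>0),
$$
together with the algebraic observation $n(1+\alpha)-\alpha=(n-1)(1+\alpha)+1$, one finds that $D^\alpha_{xC}\Phi$ is precisely $x$ times a power series in $x^{1+\alpha}$. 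Reindexing $-x\Phi(x)/(1+\alpha)$ so that it is written in the same variable and matching coefficients term by term reduces the entire claim to the single identity
$$
b_n\,\Gamma\bigl(n(1+\alpha)+1\bigr)=b_{n-1}\,\Gamma\bigl((n-1)(1+\alpha)+2\bigr),\qquad n\ge 1.
$$

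This identity is precisely the recursion encoded in the product defining $b_n$: the $i=n-1$ factor in that product equals $\Gamma((1+\alpha)(n-1)+2)/\Gamma((1+\alpha)(n-1)+\alpha+2)$, and a direct rewrite gives $(1+\alpha)(n-1)+\alpha+2=n(1+\alpha)+1$, so the denominator is exactly $\Gamma(n(1+\alpha)+1)$. Thus once termwise Caputo differentiation is justified, the ODE drops out of the very definition of $b_n$; no further delicate cancellation is needed.

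The main obstacle is therefore not a deep estimate but the rigorous justification of termwise application of $D^\alpha_{xC}$ to the series for $\Phi$. This amounts to uniform absolute convergence of the series for $\Phi'$ on bounded intervals, which in turn follows from the super-geometric decay of $|c_n|$ (the ratio $b_n/b_{n-1}$ is an inverse ratio of gamma values that decays rapidly in $n$), after which the swap of sum and integral in the Riemann--Liouville operator is a standard dominated convergence step.
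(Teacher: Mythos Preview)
Your argument is correct. Note, however, that the paper does not supply its own proof of this lemma: it is quoted verbatim from \cite{NRS} (see the ``see \cite[Proposition 1]{NRS}'' attribution), so there is no in-paper proof to compare against.

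Your direct verification is the natural one. Writing $\Phi(x)=\sum_{n\ge0}c_n x^{n(1+\alpha)}$ with $c_n=(-1)^n b_n/(1+\alpha)^n$, applying $D^\alpha_{xC}x^{p}=\tfrac{\Gamma(p+1)}{\Gamma(p+1-\alpha)}x^{p-\alpha}$ termwise, and using $n(1+\alpha)-\alpha=(n-1)(1+\alpha)+1$ does reduce the ODE to the single recursion
\[
b_n\,\Gamma\bigl(n(1+\alpha)+1\bigr)=b_{n-1}\,\Gamma\bigl((n-1)(1+\alpha)+2\bigr),
\]
and your identification $(1+\alpha)(n-1)+\alpha+2=n(1+\alpha)+1$ shows this is precisely the $i=n-1$ factor in the product defining $b_n$. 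The analytic justification you outline is also sound: since $z\mapsto\sum b_n z^n$ is entire, both $\sum|c_n|x^{n(1+\alpha)}$ and the termwise-differentiated series $\sum_{n\ge1}n(1+\alpha)|c_n|x^{n(1+\alpha)-1}$ converge uniformly on each $[0,M]$ (the exponent $n(1+\alpha)-1\ge\alpha>0$ for $n\ge1$, so there is no singularity at $0$), after which exchanging sum and the Riemann--Liouville integral $I^{1-\alpha}$ is a routine dominated-convergence step.
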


\section{The  decay properties of the profile function $\Phi$}\label{s3}
The goal of this subsection is to establish a number of estimates for solutions to~\eqref{r2}. First, we recall a known result.

\begin{lemma}\label{l-st}(see \cite[Lemma 8]{NRS})
    Let us suppose that $\Phi$ given by~\eqref{fi}. Then, for all $x>0$ we have
    $$0<x\Phi(x)\leq 4.$$
\end{lemma}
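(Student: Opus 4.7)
The lower bound $x\Phi(x) > 0$ is immediate: Proposition \ref{p0}(2) guarantees $\sE(x,t) > 0$ for all $x,t > 0$, and in view of \eqref{df-e} together with $a_0 > 0$ this forces $\Phi(y) > 0$ for every $y > 0$. So the content of the lemma is the upper bound, and my plan is to extract it directly from the fractional differential equation \eqref{r2} using the monotonicity asserted in Proposition \ref{p0}(3).

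First I would convert \eqref{r2} into an integral equation by applying $I^\alpha_{0+}$ to both sides. Because $D^\alpha_{xC}\Phi = I^{1-\alpha}_{0+}\Phi'$, one has $I^\alpha_{0+}D^\alpha_{xC}\Phi(x) = I^1_{0+}\Phi'(x) = \Phi(x)-\Phi(0) = \Phi(x)-1$, so \eqref{r2} is equivalent to
\begin{equation*}
\Phi(x) \;=\; 1 \;-\; \frac{1}{1+\alpha}\, I^\alpha_{0+}\!\bigl[y\Phi(y)\bigr](x).
\end{equation*}
Now I exploit Proposition \ref{p0}(3): since $\Phi$ is decreasing and nonnegative, $\Phi(y)\ge \Phi(x)$ for every $0\le y\le x$, so
\begin{equation*}
I^\alpha_{0+}\bigl[y\Phi(y)\bigr](x)
= \frac{1}{\Gamma(\alpha)}\int_0^x (x-y)^{\alpha-1} y\,\Phi(y)\,dy
\;\ge\; \Phi(x)\cdot I^\alpha_{0+}[y](x)
\;=\; \frac{x^{\alpha+1}\Phi(x)}{\Gamma(\alpha+2)},
\end{equation*}
where the last identity follows from the beta-function evaluation $\int_0^x y(x-y)^{\alpha-1}dy = x^{\alpha+1}B(2,\alpha) = x^{\alpha+1}\Gamma(\alpha)/\Gamma(\alpha+2)$. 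Plugging this into the integral equation and rearranging gives the explicit pointwise estimate
\begin{equation*}
\Phi(x) \;\le\; \frac{C(\alpha)}{C(\alpha)+x^{\alpha+1}},
\qquad C(\alpha):=(1+\alpha)\,\Gamma(\alpha+2),
\end{equation*}
hence $x\Phi(x) \le C(\alpha) x/(C(\alpha)+x^{\alpha+1})$.

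The remaining step, which I expect to be the only nontrivial part, is to maximize the right-hand side in $x>0$ and check the result is bounded by $4$ uniformly in $\alpha\in(0,1)$. Elementary calculus locates the unique maximum at $x^{\alpha+1}=C(\alpha)/\alpha$, giving the value $\frac{\alpha}{\alpha+1}\bigl[C(\alpha)/\alpha\bigr]^{1/(\alpha+1)}$. For $\alpha\in(0,1)$ the factor $\Gamma(\alpha+2)$ stays between $\Gamma(2)=1$ and $\Gamma(3)=2$, so $C(\alpha)/\alpha \le 4/\alpha$; a short monotonicity argument (or a crude convexity estimate using $a^{1/(\alpha+1)}\le 1+a$) shows the whole expression is well under $4$ for every $\alpha\in(0,1)$, and in fact stays close to $1$. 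This verifies $x\Phi(x)\le 4$ and completes the proof. The generous constant $4$ is chosen so the verification is painless; the argument in fact delivers a substantially better bound that could be quoted if needed later.
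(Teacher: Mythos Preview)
The paper does not actually prove this lemma; it merely cites \cite[Lemma~8]{NRS}. So there is no in-paper argument to compare against, and the question reduces to whether your proposal is sound on its own.

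It is. The conversion of \eqref{r2} into the Volterra equation $\Phi(x)=1-\tfrac{1}{1+\alpha}I^\alpha_{0+}[y\Phi(y)](x)$ is the same move the paper makes in the proof of Lemma~\ref{L_oszacowanieFi}. Your monotonicity step $\Phi(y)\ge\Phi(x)$ on $[0,x]$ together with the beta-integral identity is correct and yields
\[
\Phi(x)\Bigl(1+\tfrac{x^{\alpha+1}}{(1+\alpha)\Gamma(\alpha+2)}\Bigr)\le 1,
\]
hence the displayed bound with $C(\alpha)=(1+\alpha)\Gamma(\alpha+2)$. The optimisation in $x$ is routine, and the crude inequality $a^{1/(\alpha+1)}\le 1+a$ you mention does the job cleanly: it gives
\[
\frac{\alpha}{\alpha+1}\Bigl(\frac{C(\alpha)}{\alpha}\Bigr)^{1/(\alpha+1)}
\le \frac{\alpha}{\alpha+1}+\frac{C(\alpha)}{\alpha+1}
= \frac{\alpha}{\alpha+1}+\Gamma(\alpha+2) < 1+2=3,
\]
well below $4$.

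It is worth noting that your intermediate estimate $\Phi(x)\le C(\alpha)/(C(\alpha)+x^{\alpha+1})$ already contains, for large $x$, a version of the paper's Lemma~\ref{L_oszacowanieFi}, in fact with the better constant $(1+\alpha)\Gamma(\alpha+2)$ in place of $2^{\alpha+1}\Gamma(\alpha+2)$. The paper's proof of Lemma~\ref{L_oszacowanieFi} instead splits the integral at $x/2$ and only uses $s\ge x/2$, $\Phi(s)\ge\Phi(x)$ on the second half; your use of monotonicity over the whole interval is slightly more efficient.
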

Let us stress that for $x\in[0,1]$ the above inequality is far from being useful. Indeed, 
monotonicity of $\Phi$ guaranteed by Proposition \ref{p0} (3) and $\Phi(0)=1$ imply that $\Phi(x)\le 1$ for $x\in[0,1]$. 

Now, we will present step-by-step improvements of the above estimate. The goal is to show how fast $\Phi$ and $\Phi'$ decay at infinity. 
Here is our first observation:
\begin{lemma}\label{L_oszacowanieFi}
  For all $x>1$, the profile function $\Phi$ defined in~\eqref{fi} satisfies the following inequality
\begin{equation}\label{oszacowanieFi}
\Phi(x)\leq\Gamma(\alpha+2)\left(\frac{2}{x}\right)^{\alpha+1}.
\end{equation}
\end{lemma}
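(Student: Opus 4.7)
The plan is to convert the fractional ODE~\eqref{r2} satisfied by $\Phi$ into a Volterra-type integral equation and then exploit the monotonicity of $\Phi$ (guaranteed by Proposition~\ref{p0}(3)) to close a self-consistent estimate for $\Phi(x)$.

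First, I would apply the Riemann--Liouville integral $I^\alpha_{0+}$ to both sides of~\eqref{r2}. Using the standard Caputo identity $I^\alpha_{0+}D^\alpha_{xC}\Phi(x)=\Phi(x)-\Phi(0)$ together with the initial condition $\Phi(0)=1$, this turns~\eqref{r2} into
\begin{equation*}
\Phi(x)=1-\frac{1}{(1+\alpha)\Gamma(\alpha)}\int_0^x\frac{\tau\,\Phi(\tau)}{(x-\tau)^{1-\alpha}}\,d\tau.
\end{equation*}
Upper-bounding $\Phi(x)$ through this identity is therefore equivalent to lower-bounding the integral on the right.

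To produce such a lower bound I would restrict the integration range to $[x/2,x]$, where simultaneously $\tau\ge x/2$ and, by monotonicity, $\Phi(\tau)\ge\Phi(x)$. Since $\int_{x/2}^x(x-\tau)^{\alpha-1}\,d\tau=(x/2)^{\alpha}/\alpha$, this yields
\begin{equation*}
\int_0^x\frac{\tau\,\Phi(\tau)}{(x-\tau)^{1-\alpha}}\,d\tau
\;\ge\;\frac{x}{2}\,\Phi(x)\cdot\frac{(x/2)^{\alpha}}{\alpha}
\;=\;\frac{\Phi(x)\,x^{\alpha+1}}{\alpha\,2^{\alpha+1}}.
\end{equation*}
Substituting back and simplifying via $(1+\alpha)\alpha\Gamma(\alpha)=\Gamma(\alpha+2)$, I would arrive at the implicit inequality
\begin{equation*}
\Phi(x)\left(1+\frac{x^{\alpha+1}}{\Gamma(\alpha+2)\,2^{\alpha+1}}\right)\le 1,
\end{equation*}
from which~\eqref{oszacowanieFi} drops out at once after discarding the positive summand $1$ inside the parentheses and solving for $\Phi(x)$.

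I do not foresee any serious obstacle: the scheme is a clean monotonicity-driven bootstrap requiring no prior asymptotic information on $\Phi$ beyond positivity and monotonicity. The only minor points that will need a line of justification are the inversion formula $I^\alpha_{0+}D^\alpha_{xC}\Phi=\Phi-1$ (valid because $\Phi$ is entire, hence absolutely continuous on every compact interval) and the remark that the computation actually proves the bound for all $x>0$; the threshold $x>1$ in the statement merely reflects the fact that~\eqref{oszacowanieFi} is weaker than the trivial bound $\Phi\le 1$ for $x\le 1$ and only becomes informative past that threshold.
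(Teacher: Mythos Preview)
Your proposal is correct and follows essentially the same route as the paper: both convert~\eqref{r2} into the Volterra identity $\Phi(x)=1-\frac{1}{(1+\alpha)\Gamma(\alpha)}\int_0^x \tau\Phi(\tau)(x-\tau)^{\alpha-1}\,d\tau$, then lower-bound the integral over $[x/2,x]$ using $\tau\ge x/2$ and $\Phi(\tau)\ge\Phi(x)$ from monotonicity. The only cosmetic difference is that the paper invokes $\Phi(x)\ge 0$ on the left to isolate the integral immediately, whereas you rearrange into the implicit inequality $\Phi(x)\bigl(1+x^{\alpha+1}/(\Gamma(\alpha+2)2^{\alpha+1})\bigr)\le 1$ before dropping the $1$; the resulting bound is identical.
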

\begin{proof}
    From Lemma~\ref{L_rown}, we have
\begin{equation}\label{eq4}
    D^\alpha_{xC}\Phi(x)=-\frac{1}{1+\alpha} x\Phi(x),\  x>0,\qquad  \Phi(0)=1.
\end{equation}
After we apply the fractional  Riemann-Liouville integral operator, $I^{\alpha},$ to both sides of~\eqref{eq4} and take into account \eqref{r-ID}, we obtain
$$\Phi(x)=1-\frac{1}{\Gamma(\alpha)(1+\alpha)}\int_0^x\frac{s\Phi(s)ds}{(x-s)^{1-\alpha}}\geq 0,\qquad x>0.$$
Therefore, using the monotonicity of the function $\Phi$, we get
$$\Gamma(\alpha)(1+\alpha)\geq\int_0^x\frac{s\Phi(s)ds}{(x-s)^{1-\alpha}}=\int_0^\frac{x}{2}\frac{s\Phi(s)ds}{(x-s)^{1-\alpha}}+\int_\frac{x}{2}^x\frac{s\Phi(s)ds}{(x-s)^{1-\alpha}}$$
$$\geq \int_\frac{x}{2}^x\frac{s\Phi(s)ds}{(x-s)^{1-\alpha}}\geq\frac{x}{2}\Phi(x)\int_\frac{x}{2}^x(x-s)^{\alpha-1}ds=\frac{1}{\alpha}\left(\frac{x}{2}\right)^{\alpha+1}\Phi(x),$$
which ends the proof.
\end{proof}

After establishing an improved decay estimate for $\Phi$ we are in a position to prove an  estimate for $\Phi'$.
The lemma below plays a crucial role in  estimating  the solutions to \eqref{eq1}.
\begin{lemma}\label{l4}
For all $x>0$, the derivative of the profile function $\Phi$ can be 
estimated as follows,
\begin{equation}\label{poch}
|\Phi'(x)|\leq 
\left\{
\begin{array}{cc}
 \frac1{\Gamma(1+\alpha)} x^\alpha    &  x\in [0, 1], \\
 \frac{C_1(\alpha)}{x}&  x>1,
\end{array}
\right.
\end{equation}
where 
$$
C_1(\alpha) = \alpha 2^{1+\alpha}\max\bigg\{\int_{\frac12}^1 \frac{du}{ u^{1+\alpha}(1-u)^{1-\alpha}}, 2\bigg\}.
$$
\end{lemma}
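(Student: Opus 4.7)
The plan is to extract a pointwise bound on $\Phi'$ from the Volterra equation satisfied by $\Phi$, and then specialize to the two regimes. Applying $I^\alpha$ to~\eqref{eq4} (as in the proof of Lemma~\ref{L_oszacowanieFi}) gives
\[
\Phi(x) = 1 - \frac{1}{\Gamma(\alpha)(1+\alpha)}\int_0^x \frac{s\Phi(s)}{(x-s)^{1-\alpha}}\,ds.
\]
To differentiate past the singular kernel, I substitute $s = xu$, isolating the $x$-dependence in the prefactor:
\[
\Phi(x) = 1 - \frac{x^{1+\alpha}}{\Gamma(\alpha)(1+\alpha)}\int_0^1 \frac{u\,\Phi(xu)}{(1-u)^{1-\alpha}}\,du.
\]
Differentiating termwise (the singularity at $u=1$ is integrable and $\Phi\in C^1([0,\infty))$) produces
\[
\Phi'(x) = -\frac{x^\alpha}{\Gamma(\alpha)}\int_0^1 \frac{u\,\Phi(xu)}{(1-u)^{1-\alpha}}\,du - \frac{x^{1+\alpha}}{\Gamma(\alpha)(1+\alpha)}\int_0^1 \frac{u^2\Phi'(xu)}{(1-u)^{1-\alpha}}\,du.
\]

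By Proposition~\ref{p0}(3), $\Phi'(xu)\le 0$, so the last integral is nonpositive, making the entire second summand nonnegative. Hence $\Phi'(x)$ is bounded below by the first summand, and taking absolute value yields the master estimate
\[
|\Phi'(x)| \le \frac{x^\alpha}{\Gamma(\alpha)}\int_0^1 \frac{u\,\Phi(xu)}{(1-u)^{1-\alpha}}\,du.
\]
For $x\in[0,1]$, using $\Phi(xu)\le 1$ and the beta integral $\int_0^1 u(1-u)^{\alpha-1}\,du = B(2,\alpha) = \Gamma(\alpha)/\Gamma(\alpha+2)$ gives $|\Phi'(x)|\le x^\alpha/\Gamma(\alpha+2) \le x^\alpha/\Gamma(1+\alpha)$, the last inequality following from $\Gamma(\alpha+2)=(\alpha+1)\Gamma(\alpha+1)\ge\Gamma(1+\alpha)$.

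For $x>1$, the cleanest path is to undo the substitution via $v=xu$, rewriting the master bound as $\frac{1}{x\,\Gamma(\alpha)}\int_0^x v\,\Phi(v)(x-v)^{\alpha-1}\,dv$, and then to use the Volterra equation above to identify the integral with $\Gamma(\alpha)(1+\alpha)(1-\Phi(x))$. This gives $|\Phi'(x)|\le (1+\alpha)(1-\Phi(x))/x\le (1+\alpha)/x$. To match the stated constant, observe that for $u\in(1/2,1)$ we have $u^{1+\alpha}\le 1$, so
\[
\int_{1/2}^1 \frac{du}{u^{1+\alpha}(1-u)^{1-\alpha}} \ge \int_{1/2}^1 (1-u)^{\alpha-1}\,du = \frac{2^{-\alpha}}{\alpha},
\]
whence $C_1(\alpha)\ge \alpha\cdot 2^{1+\alpha}\cdot 2^{-\alpha}/\alpha = 2 > 1+\alpha$ throughout $(0,1)$. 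The main subtlety is the un-substitution step, which collapses the integral into the transparent factor $1-\Phi(x)$; once the $u$-scaling isolates the $x$-dependence in the prefactor, differentiation under the integral is routine, and the sign discipline from $\Phi'\le0$ takes care of the $\Phi'$-term that would otherwise make the bound implicit.
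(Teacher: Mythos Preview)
Your proof is correct and, for the regime $x>1$, genuinely simpler than the paper's. Both arguments start from the same Volterra relation and both discard the nonpositive $\Phi'$-contribution to obtain a ``master estimate''; on $[0,1]$ the two routes coincide and yield the same bound $x^\alpha/\Gamma(2+\alpha)$. The difference is in how the $x>1$ case is handled. The paper works from the identity $\Phi'(x)=-\frac{1}{1+\alpha}I^\alpha[(s\Phi)'] (x)$, splits the resulting integral at $x/2$, integrates one piece by parts, and invokes Lemma~\ref{L_oszacowanieFi} (the $x^{-(1+\alpha)}$ decay of $\Phi$) on the other, arriving at the constant $C_1(\alpha)$. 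You instead differentiate the scaled Volterra equation, obtain the bound $|\Phi'(x)|\le \frac{1}{x\Gamma(\alpha)}\int_0^x v\Phi(v)(x-v)^{\alpha-1}\,dv$, and then observe that this integral is \emph{exactly} $(1+\alpha)(1-\Phi(x))$ by the Volterra equation itself. This immediately gives the sharper bound $|\Phi'(x)|\le(1+\alpha)/x$ for all $x>0$, bypassing Lemma~\ref{L_oszacowanieFi} entirely; your closing inequality $C_1(\alpha)\ge 2>1+\alpha$ then shows the stated constant is also admissible. So your argument is shorter, uses one fewer lemma, and produces a better constant.
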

\begin{proof}
We begin with the fractional differential equation presented in Lemma~\ref{L_rown}
\begin{equation}
    D^\alpha_{xC}\Phi(x)=-\frac{1}{1+\alpha} x\Phi(x),\;\; x>0,\;\; \Phi(0)=1.
\end{equation}
After we apply  the Riemann-Liouville derivative $D^{1-\alpha}_{xRL}$ to both sides, 
we obtain
$$\Phi'(x)=-\frac{1}{1+\alpha}D^{1-\alpha}_{xRL}[x\Phi(x)]=-\frac{1}{1+\alpha}D^{1-\alpha}_{xC}[x\Phi(x)]=-\frac{1}{1+\alpha}I^{\alpha}[x\Phi(x)]'<0.$$
Using the definition of the Riemann-Liouville integral, we have
$$0<I^{\alpha}[x\Phi(x)]'=\frac{1}{\Gamma(\alpha)}\int_0^x{\frac{(\Phi(s)+s\Phi '(s))ds}{(x-s)^{1-\alpha}}},\;\;\;x>0.$$
Of course, due to monotonicuty of $\Phi$ we have
$$0< \Phi(x)+x\Phi '(x)=\Phi(x)-x|\Phi '(x)|,\quad x>0.$$
If $x\in [0,1]$, then the above inequality implies that
$$
| \Phi'(x)| \le \frac{1}{\Gamma(\alpha)(1+\alpha)}\int_0^x{\frac{\Phi(s)ds}{(x-s)^{1-\alpha}}} \le \frac{\alpha}{\Gamma(2+\alpha)}\int_0^x\frac{1 }{(x-s)^{1-\alpha}}
\, ds = \frac{x^\alpha}{\Gamma(2+\alpha)}.$$

Now, for $x>1$, we proceed differently. We have
$$
|\Phi'(x)| = - \Phi'(x) =
\frac\alpha{\Gamma(2+\alpha)} \bigg(\int_0^{ax} + \int_{ax}^x\bigg) \frac{(s\Phi'(s)}{(x-s)^{1-\alpha}}\, ds=: I_1 + I_2,
$$
where $a\in (0,1)$ is arbitrary and we will take $a=\frac12$. In order to estimate $I_2$ we take into account that $\Phi'(x)<0$ and we use Lemma~\ref{L_oszacowanieFi},
\begin{align*}
\frac{\Gamma(2+\alpha)}\alpha I_2 &=  \int_{ax}^x 
\frac{\Phi(s) + s \Phi'(s)}{(x-s)^{1-\alpha}}\, ds
\le \int_{ax}^x 
\frac{ \Phi(s) }{(x-s)^{1-\alpha}}\, ds \\
&\le  \int_{ax}^x 
\frac{ \Gamma(2+\alpha) 2^{1+\alpha}\, ds }{s^{1+\alpha}(x-s)^{1-\alpha}} 
= \frac {\Gamma(2+\alpha) 2^{1+\alpha}}x \int_a^1 \frac{du}{ u^{1+\alpha}(1-u)^{1-\alpha}}.\\
\end{align*}
Hence, for $a= \frac12$ we have
$$
I_2 \le \alpha 2^{1+\alpha} \int_{\frac12}^1 \frac{du}{ u^{1+\alpha}(1-u)^{1-\alpha}} \frac 1 x.
$$
In order to estimate $I_1$ we will integrate by parts
\begin{align*}
\frac{\Gamma(2+\alpha)}\alpha I_1 &=  (\alpha-1)\int^{ax}_0 \frac{s\Phi(s)}{(x-s)^{2-\alpha}} + \frac{s\Phi(s)}{(x-s)^{1-\alpha}}\bigg|_{s=0}^{s=ax}\le \frac{ax \Phi(ax)}{(1-a)^{1-\alpha}x^{1-\alpha}}.
\end{align*}
We dropped the integral because it is negative. Finally, Lemma \ref{L_oszacowanieFi} yields for $a=\frac12$,
$$
I_1 \le \frac{\alpha 2^{2+\alpha}} x
$$
and
$$
|\Phi'(x)| \le C_1(\alpha) \frac 1x,
$$
where $ C_1(\alpha) = \alpha 2^{1+\alpha}\max\{\int_{\frac12}^1 \frac{du}{ u^{1+\alpha}(1-u)^{1-\alpha}}, 2\}. $
Our claim follows.
\end{proof}
\begin{remark}
Certainly, the above estimate is far from being optimal, since integration of the estimate of $\Phi'$ provided above yields a logarithmic bound on $\Phi$ which is much worse than the result of Lemma \ref{L_oszacowanieFi}.
\end{remark}

\section{The convergence result} \label{s4}
In this section, we show our convergence result. 
For this purpose we recall the representation formulas.

\begin{proposition}(see \cite[Theorem 4.1]{NRS})\label{p1}
Let us extend $\sE$  to   $\bR^2$ by the following formula $$\sE(x,t) = \sE(|x|,t) \chi_{\bR_+}(t), \qquad (x,t) \in \bR^2.$$
Let us suppose that  $g\in L^p(0,\infty),$ where  $p\in[1,\infty)$ and $g$ has compact support, (resp. $g\in C^0_c([0,\infty))$
and   we set $
\sE_t(x) = \sE(x,t).$
We define functions $w_1$, $w_2$ by the  following formulas,
\begin{equation}\label{rn3.5}
w_1(x,t) = \int_0^\infty ( \sE_t(x-y) - \sE_t(x+y))g(y)\,dy,
\end{equation}
\begin{equation}\label{rn3.6}
w_2(x,t)  = \int_0^\infty ( \sE_t(x-y) + \sE_t(x+y))g(y)\,dy.
\end{equation}
Then, \\
(a) For all $t>0$ functions $w_1(\cdot,t)$ and $w_2(\cdot,t)$ belong to $C^{1+\alpha}_{loc}(\bR_+)$. Moreover, $w_1$ (resp. $w_2$) is a classical solution to \eqref{eq1}, \eqref{rD} (resp. \eqref{eq1}, \eqref{rN}).\\
(b) The functions $w_1$ and $w_2$ belong to $L^\infty(\bR_+; L^p(\bR_+))$ if $p<\infty$ (resp. $w_1, w_2 \in L^\infty(\bR_+; C(\bR_+))$, when $g\in C^0_c([0,\infty))$).
Moreover,
\begin{equation*}
 \lim_{t\to 0}\|w_1(\cdot, t) - g\|_{L^p} =0, \qquad(\hbox{resp. }
\lim_{t\to 0}\|w_2(\cdot, t) - g\|_{L^\infty} =0).   
\end{equation*}
When $g$ is continuous, we require $g(0) =0$ in case of the Dirichlet boundary condition, \eqref{rD}.
\end{proposition}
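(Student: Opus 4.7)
The plan is to view $w_1$ and $w_2$ as whole-line convolutions with appropriately extended data, exploiting the evenness of the extended fundamental solution. Setting $g_o(y) := \mathrm{sgn}(y) g(|y|)$ and $g_e(y) := g(|y|)$ for $y\in\bR$, both compactly supported and in $L^p(\bR)$, one rewrites
$$
w_1(x,t) = \int_\bR \sE_t(x-y) g_o(y)\, dy, \qquad w_2(x,t)=\int_\bR \sE_t(x-y) g_e(y)\, dy,
$$
recovering \eqref{rn3.5} and \eqref{rn3.6} by splitting the integrals over $(-\infty,0)$ and $(0,\infty)$ and invoking $\sE_t(x)=\sE_t(|x|)$.

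For part (b), convergence $w_j(\cdot,t)\to g$ in $L^p(0,\infty)$ (resp.\ $L^\infty$) as $t\to 0$ follows from a standard approximation-of-identity argument: by Proposition \ref{p0}(4), $\int_\bR \sE_t(x)\,dx=1$, and the self-similar form \eqref{df-e} shows that $\sE_t$ is a dilation of the fixed nonnegative $L^1$ function $a_0\Phi(|\cdot|)$. Young's inequality then yields boundedness in $L^\infty_t L^p_x$, while convergence at $t=0$ follows from continuity of translation in $L^p$ for $p<\infty$, and from uniform continuity of $g$ (together with $g(0)=0$ in the Dirichlet case, which is what keeps the odd extension $g_o$ continuous) for the $L^\infty$ statement.

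Part (a) is more delicate. The boundary conditions are immediate from symmetry: $w_1(0,t)=0$ because the integrand in $\int_\bR \sE_t(-y)g_o(y)\,dy$ is odd, while $\partial_x w_2(0,t)=0$ because $\partial_x[\sE_t(x-y)]|_{x=0}$ paired with $g_e(y)$ gives an odd integrand. To place this on firm footing and to establish $w_j\in C^{1+\alpha}_{\mathrm{loc}}(\bR_+)$, I would differentiate under the integral using the decay bounds of Lemmas \ref{L_oszacowanieFi} and \ref{l4}: after the scaling $\sE_t(x)=a_0 t^{-1/(1+\alpha)}\Phi(xt^{-1/(1+\alpha)})$, the bounds $\Phi(r)\le Cr^{-1-\alpha}$ and $|\Phi'(r)|\le C/r$ for $r\ge 1$, combined with the smoothness at the origin, yield integrable dominants uniformly on compact subsets of $(0,\infty)^2$ (splitting $y$ near and away from $x$). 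Since $\sE_t$ itself solves $(\ref{eq1})_1$ pointwise, the differentiated convolutions show that $w_1,w_2$ solve the equation classically.

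The main obstacle is the nonlocal nature of $D^\alpha_{xC}$: applying it to the convolution requires an integration in an auxiliary variable from $0$ to $x$ of the derivative in $x$, and interchanging $D^\alpha_{xC}$ with $\int_0^\infty dy$ demands Fubini for a singular kernel. I would handle this by working first with $D^{1-\alpha}_{xRL}\partial_x$ acting on the convolution, which coincides with $D^\alpha_{xC}\partial_x$ whenever the boundary value of $\partial_x w_j$ at $x=0$ is controlled, using the $1/x$ estimate of Lemma \ref{l4} to guarantee absolute convergence of the iterated integral, and then differentiating once more in $x$ to recover $\partial_x D^\alpha_{xC} w_j$. The compact support of $g$ is used precisely here to rule out a tail contribution as $y\to\infty$, so that the dominated convergence and Fubini arguments close.
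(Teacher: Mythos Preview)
The paper does not prove this proposition at all: it is stated with the attribution ``(see \cite[Theorem 4.1]{NRS})'' and then simply used. So there is nothing in the present paper to compare your argument against; the authors defer entirely to the reference \cite{NRS} for the proof.

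That said, your outline is broadly the right shape---the odd/even extension trick reducing $w_1,w_2$ to whole-line convolutions is exactly how such representation formulas are typically derived, and the approximate-identity reasoning for part~(b) is standard and correct. One point deserves care: in your last paragraph you propose to pass from $\partial_x D^\alpha_{xC}$ to $D^{1-\alpha}_{xRL}\partial_x$, but these are not the same operator in general (the former is $\partial_x I^{1-\alpha}\partial_x$, the latter is $\partial_x I^{\alpha}\partial_x$), so the identification you sketch needs to be stated more precisely. The genuine difficulty, which you correctly flag, is that $D^\alpha_{xC}$ is anchored at $x=0$ and does not commute with translation, so one cannot simply push it through the convolution onto $\sE_t$; the actual argument in \cite{NRS} handles this, and if you want a self-contained proof you would need to carry out that step explicitly rather than by analogy with local operators.
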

Our asymptotic analysis depends in a crucial manner  on this representation formula. Here is the result which follows from \eqref{rn3.5}  and \eqref{rn3.6}.
\begin{theorem}\label{t1}
Let us suppose that  $g,\ yg(\cdot) \in L^1(0,\infty)$ have a compact support, $\theta$ is any fixed number from the interval $(0,1)$, 
\begin{equation}\label{df-m}
m=\int_0^\infty g(y)dy,
\end{equation} 
and $w_1$, (resp. $w_2$), is defined by~\eqref{rn3.5}, (resp.~\eqref{rn3.6}).
Then, \\
(a) there is $C_1>0$ such that 
$$
\|w_1(\cdot,t) \|_{L^{1/(1-\theta)}} \le C_1  t^{- (1+\theta)/(1+\alpha)} \|y g(\cdot)\|_{L^1}, \qquad t>1;
$$
(b) there is $C_2>0$ such that 
$$
\|w_2(\cdot,t)-2m\sE_t(\cdot)\|_{L^{1/(1-\theta)}} \le C_2 
 t^{- (1+\theta)/(1+\alpha)} \|y g(\cdot)\|_{L^1},\qquad t>1;
$$
(c) for $C_2(\alpha)$ defined in Lemma \ref{l5} we have
$$
\|w_2(\cdot,t)-2m\sE_t(\cdot)\|_{L^\infty}, \quad 
\|w_1(\cdot,t) \|_{L^\infty} \le \frac{C_2(\alpha)}{t^{2/(1+\alpha)}} \|y g(\cdot)\|_{L^1}, \qquad t>1.
$$
\end{theorem}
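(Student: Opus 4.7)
The approach splits into three steps: establishing the pointwise bound (c), combining it with the weak \((1,1)\) estimate of Lemma \ref{l5}, and interpolating via the distribution function formula.

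For part (c), I would exploit the fact that \(\sE_t\), extended evenly to \(\bR\), is \(C^1\) at the origin, since Lemma \ref{l4} forces \(\Phi'(0)=0\) through the bound \(|\Phi'(x)|\le x^\alpha/\Gamma(1+\alpha)\) on \([0,1]\). The scaling identity \eqref{df-e} then gives
\[
\|\sE_t'\|_{L^\infty(\bR)} = a_0\, t^{-2/(1+\alpha)}\, \|\Phi'\|_{L^\infty([0,\infty))},
\]
and Lemma \ref{l4} bounds \(\|\Phi'\|_{L^\infty}\) by \(\max\{1/\Gamma(1+\alpha),\,C_1(\alpha)\}\). Applying the mean value theorem to the even extension yields \(|\sE_t(x-y)-\sE_t(x+y)|\le 2y\|\sE_t'\|_{L^\infty}\) and the analogous bound \(|\sE_t(x-y)+\sE_t(x+y)-2\sE_t(x)|\le 2y\|\sE_t'\|_{L^\infty}\). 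Inserting these into \eqref{rn3.5} and into the identity \(w_2-2m\sE_t=\int_0^\infty[\sE_t(\cdot-y)+\sE_t(\cdot+y)-2\sE_t(\cdot)]g(y)\,dy\), and using \(\int_0^\infty 2yg(y)\,dy = 2\|y g\|_{L^1}\), produces (c).

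For parts (a) and (b), the plan is a Marcinkiewicz-style interpolation. For any function \(f\) with \(\|f\|_{L^\infty}\le A\) and \(|\{|f|>\lambda\}|\le B/\lambda\), the layer-cake identity gives
\[
\|f\|_{L^p}^p = p\!\int_0^{\|f\|_\infty}\!\!\lambda^{p-1}\bigl|\{|f|>\lambda\}\bigr|\,d\lambda \;\le\; \frac{p}{p-1}\,B\, A^{p-1},
\]
so \(\|f\|_{L^p}\le C_p\, B^{1/p}A^{1-1/p}\). Taking \(f = w_1(\cdot,t)\) or \(f = w_2(\cdot,t)-2m\sE_t\), the pointwise bound supplies \(A = C\,t^{-2/(1+\alpha)}\|yg\|_{L^1}\) and Lemma \ref{l5} supplies \(B = C\,t^{-1/(1+\alpha)}\|yg\|_{L^1}\), producing \(\|f\|_{L^p}\le C\,t^{-(2-1/p)/(1+\alpha)}\|yg\|_{L^1}\). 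Setting \(p = 1/(1-\theta)\) makes the exponent \((1+\theta)/(1+\alpha)\), which is exactly (a) and (b).

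The main obstacle is, of course, the weak \((1,1)\) bound itself. Once it is granted, the rest is the interpolation identity above. To prove it, one would presumably combine the sharp decay \(\Phi(x)\le C x^{-(1+\alpha)}\) from Lemma \ref{L_oszacowanieFi} with the derivative estimate from Lemma \ref{l4} and split the \(x\)-integral of the kernel into near-origin and far-origin pieces at the self-similar scale \(t^{1/(1+\alpha)}\). The exclusion of \(p=1\) in (a) and (b) traces back to the fact that only \emph{weak} \(L^1\) decay is available; extracting strong \(L^1\) would require cancellations we do not see directly in the kernels \(\sE_t(x-y)\pm\sE_t(x+y)\).
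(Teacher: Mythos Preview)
Your proposal is correct and follows the same underlying strategy as the paper: deduce from Lemma~\ref{l5} both an $L^\infty$ bound (constant $\sim t^{-2/(1+\alpha)}$) and a weak-$L^1$ bound (constant $\sim t^{-1/(1+\alpha)}$) on $v^\pm-m\sE_t$, then interpolate. The paper packages the interpolation by invoking the abstract Marcinkiewicz theorem (Proposition~\ref{t-im}) on the operator $g\mapsto v^\pm(\cdot,t)-m\sE_t$ with $p_0=p_1=1$, $q_0=1$, $q_1=\infty$; you instead carry out the layer-cake computation $\|f\|_p^p\le \frac{p}{p-1}BA^{p-1}$ by hand, which is exactly the proof of Marcinkiewicz in this special case and arguably cleaner here. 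For part~(c) the paper just reads off the supremum of the pointwise bound in Lemma~\ref{l5}, while you re-derive it via the mean-value theorem and $\|\Phi'\|_{L^\infty}\le C_1(\alpha)$; these are the same estimate, since the first region in Lemma~\ref{l5} is precisely that mean-value argument. One small correction to your last paragraph: the proof of Lemma~\ref{l5} uses only the derivative estimate of Lemma~\ref{l4}, not the decay $\Phi(x)\lesssim x^{-(1+\alpha)}$ of Lemma~\ref{L_oszacowanieFi}; the latter is used only inside the proof of Lemma~\ref{l4}.
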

Before we start the proof we notice  that the two key objects we have to study are
\begin{equation}\label{df-v}
v^+(x,t) = \int_0^\infty    \sE_t(x+y)g(y)\,dy,\qquad
v^-(x,t) = \int_0^\infty \sE_t(x-y) g(y)\,dy.
\end{equation}
Here is our main observation.
\begin{lemma}\label{l5}
Let us suppose that $g$ satisfies the assumptions of Theorem \ref{t1}, $m$ is given by \eqref{df-m} and $v^+$, $v^-$ are defined in \eqref{df-v}. Then, 
$$
|v^\pm (x, t)  - m \sE_t(x)| \le \frac{C_2(\alpha)}{t^{1/(1+\alpha)}}\| y g(\cdot)\|_{L^1}\bigg(\frac1{t^{1/(1+\alpha)}}\chi_{[0, t^{1/(1+\alpha)}]}(x) + \frac 1x \chi_{(t^{1/(1+\alpha)}, \infty)}(x)\bigg)\qquad\hbox{for }t>1,
$$
where $C_2(\alpha) =2a_0 C_1(\alpha)$ and $C_1(\alpha)$ is defined in Lemma \ref{l4}.
\end{lemma}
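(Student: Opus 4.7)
The starting identity is
$$v^\pm(x,t) - m\sE_t(x) = \int_0^\infty \bigl[\sE_t(x\pm y) - \sE_t(x)\bigr]\, g(y)\, dy,$$
obtained by writing $m\,\sE_t(x) = \int_0^\infty \sE_t(x)\, g(y)\, dy$. Since Lemma~\ref{l4} gives $\Phi'(0)=0$, the even extension $\sE_t(|\cdot|)$ is of class $C^1$ across the origin, so the mean value theorem legitimately yields
$$|\sE_t(x\pm y) - \sE_t(x)| \le y\, \sup_{\xi \in I^\pm(x,y)} |\sE_t'(\xi)|,$$
where $I^+(x,y)=[x,x+y]$ and $I^-(x,y)$ is an interval of length at most $y$ sitting inside $[0,\infty)$ with endpoints $x$ and $|x-y|$. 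Combined with the scaling identity $|\sE_t'(\xi)| = a_0 t^{-2/(1+\alpha)}\,|\Phi'(\xi/t^{1/(1+\alpha)})|$, this reduces the entire lemma to pointwise control of $\Phi'$, which is exactly what Lemma~\ref{l4} supplies.

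For $x \le t^{1/(1+\alpha)}$, the plan is to use the uniform bound $|\Phi'(z)| \le C_1(\alpha)$ (combining the two regimes of Lemma~\ref{l4}, with the small-$z$ constant absorbed into $C_1(\alpha)$), producing $|\sE_t'(\xi)| \le a_0 C_1(\alpha)\, t^{-2/(1+\alpha)}$ and hence the first half of the target inequality after multiplying by $y$ and integrating against $g$. For $x > t^{1/(1+\alpha)}$ and the plus sign, every $\xi \in I^+$ satisfies $\xi \ge x > t^{1/(1+\alpha)}$, so the $1/z$-decay of Lemma~\ref{l4} gives $|\sE_t'(\xi)| \le a_0 C_1(\alpha)/(t^{1/(1+\alpha)} x)$, yielding the required $1/x$ factor immediately.

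The main obstacle lies in the $v^-$ case with $x > t^{1/(1+\alpha)}$: when $y$ gets close to $x$, the interval $I^-$ reaches near the origin, and neither the sharp $1/z$-decay of $\Phi'$ nor a crude uniform bound directly produces the correct $1/x$-behavior. To resolve this, I would split the integral at $y=x/2$. On $\{y \le x/2\}$ the interval $I^-$ is contained in $[x/2,x]$, hence $\xi/t^{1/(1+\alpha)} > 1/2$ and the $1/z$-bound of Lemma~\ref{l4} still applies (its validity extends from $z>1$ to $z>1/2$ by inspecting the proof, at the cost of a factor $2$ absorbed into $C_2(\alpha)$), giving once more the $1/(t^{1/(1+\alpha)} x)$ pointwise estimate. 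On $\{y > x/2\}$ I would abandon the mean value theorem altogether and use the trivial inequality $|\sE_t(x-y) - \sE_t(x)| \le 2\sE_t(0) = 2 a_0 t^{-1/(1+\alpha)}$ together with the elementary observation $1 \le 2y/x$, so that
$$\int_{x/2}^\infty 2a_0 t^{-1/(1+\alpha)}\, g(y)\, dy \le \frac{4 a_0}{t^{1/(1+\alpha)}\, x}\,\|y g(\cdot)\|_{L^1},$$
which has exactly the right shape. Summing the two contributions and choosing $C_2(\alpha) = 2 a_0 C_1(\alpha)$ (or a slightly larger multiple into which the incidental factors of $2$ are absorbed) closes the argument.
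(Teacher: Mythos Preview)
Your argument is correct. For $v^+$ it is close in spirit to the paper's proof: both write the difference via the fundamental theorem of calculus and invoke Lemma~\ref{l4}, though the paper splits the $y$- and $s$-integrals more carefully (at $y_0=t^{1/(1+\alpha)}-x$ and at $s_0(y)$) where you simply use the uniform bound $|\Phi'|\le C_1(\alpha)$; your version is shorter and loses nothing. One small remark: since $|\Phi'(z)|\le z^{1+\alpha}/(\Gamma(1+\alpha)z)\le C_1(\alpha)/z$ already for $z\in(0,1]$ (using $C_1(\alpha)\ge 2>1/\Gamma(1+\alpha)$), the extension of the $1/z$-bound to $z>1/2$ costs no extra factor at all.

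The genuine difference is in the treatment of $v^-$ for $x>t^{1/(1+\alpha)}$. The paper exploits the compact support hypothesis $\supp g\subset[0,R]$ and restricts to $t>R^{1+\alpha}$, so that for $x>2R$ every $y$ in the integral satisfies $y\le R<x/2$, forcing the argument of $\Phi'$ to stay above $1$ and allowing $1/(x-y)\le 2/x$. You instead split the $y$-integral at $x/2$: on $\{y\le x/2\}$ you recover the same mechanism, while on $\{y>x/2\}$ you drop the mean value theorem and use the crude bound $|\sE_t(x-y)-\sE_t(x)|\le 2\sE_t(0)$ together with $1\le 2y/x$. This is a real improvement: your proof never uses that $g$ has compact support, and in fact answers the question raised in the paper's final remark by showing that Lemma~\ref{l5} (and hence Theorem~\ref{t1}) holds under the sole assumptions $g,\,yg(\cdot)\in L^1(0,\infty)$.
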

\begin{proof} We will first estimate $v^+ - \sE_t$.
Let us notice that due to the definitions of $\sE_t$ and $m$, see \eqref{df-e} and \eqref{df-m}, we have
\begin{align*}
v^+(x,t) - m\sE_t(x) &= \frac{a_0}{t^{1/(1+\alpha)}}
\int_0^\infty
\bigg( \Phi \bigg(\frac{x+y}{t^{1/(1+\alpha)}}\bigg)
-  \Phi \bigg(\frac{x}{t^{1/(1+\alpha)}}\bigg)\bigg)g(y)\, dy\\
&= \frac{a_0}{t^{1/(1+\alpha)}}
\int_0^\infty \int_0^1 \frac{y}{t^{1/(1+\alpha)}}
\Phi'\bigg(\frac{x+sy}{t^{1/(1+\alpha)}}\bigg)\, ds g(y)\, dy =:I
.
\end{align*}
We consider two cases: $\frac{x}{t^{1/(1+\alpha)}}< 1$ and  $\frac{x}{t^{1/(1+\alpha)}} \ge 1$. 

When $\frac{x}{t^{1/(1+\alpha)}}< 1$ we set $y_0 = t^{1/(1+\alpha)}- x$ and $\xi=\frac{x+sy}{t^{1/(1+\alpha)}}$. We calculate
$$
I = \frac{a_0}{t^{2/(1+\alpha)}} \bigg(\int_0^{y_0} + \int_{y_0}^\infty\bigg)\int_0^1 y \Phi'\big(\xi
\big)\, ds g(y)\, dy = I_1 + I_2.
$$
We first estimate $|I_1|$ by noticing that $|\xi|\le1$ and $x+y_0= t^{1/(1+\alpha)}$,
\begin{align*}
 |I_1| &\le \frac{a_0}{\Gamma(1+\alpha)t^{2/(1+\alpha)}} \int_0^{y_0} \int_0^1y| \xi|^\alpha
\, ds |g(y)|\, dy
\le \frac{a_0}{\Gamma(1+\alpha)t^{2/(1+\alpha)}} 
\int_0^{y_0} y| g(y)| \, dy,   
\end{align*}
where we applied  Lemma \ref{l4} to bound $\Phi'.$

Now, we estimate $I_2$  and again we will use Lemma \ref{l4},
\begin{align*}
 |I_2 |&\le 
 \frac{a_0 C_1(\alpha)}
 {t^{2/(1+\alpha)}} \int_{y_0}^\infty \bigg(
 \int_0^{s_0(y)} \bigg(\frac{x+sy}{t^{1/(1+\alpha)}}\bigg)^\alpha \, ds + \int_{s_0(y)}^1 
 \frac {t^{1/(1+\alpha)}}{x+sy}\,ds \bigg)y|g(y)|\, dy\\ &
 \le \frac{a_0 C_1(\alpha)}{
 t^{2/(1+\alpha)}} \int_{y_0}^\infty y |g(y)|\, dy,
\end{align*}
where we set $s_0(y) = (t^{1/(1+\alpha)}-x)/y$ for $y\ge y_0$. 
Hence,
$$
|I| \le \frac{a_0 C_1(\alpha)}{
t^{2/(1+\alpha)}} \int_{0}^\infty y |g(y)|\, dy
$$
for $x< t^{1/(1+\alpha)}$.

Now, we take care of the second case, i.e. $\frac{x}{t^{1/(1+\alpha)}} \ge 1$. If this happens, then the argument of $\Phi'$ is always greater than 1. Hence, we proceed as in the estimate of $I_2$ above, however, $y_0=0$. Thus,
$$
|I| \le \frac{a_0 C_1(\alpha)}{
xt^{1/(1+\alpha)}} \int_{0}^\infty y |g(y)|\, dy.
$$
Our claim follows for $v^+$.


Now, we turn our attention to $v^- -m\sE_t$. We notice
$$
v^-(x,t) - m\sE_t (x)= \frac{a_0}{t^{1/(1+\alpha)}}\int_0^\infty
\bigg(\Phi\bigg( \frac{x- y}{t^{1/(1+\alpha)}}\bigg)
-\Phi\bigg( \frac{x}{t^{1/(1+\alpha)}}\bigg)\bigg)g(y)\,dy.
$$
We also see that $\Phi$ extended to negative arguments is differentiable at $0$ due to \eqref{poch}. Hence,
$$
v^-(x,t) - m\sE_t(x) = -\frac{a_0}{t^{2/(1+\alpha)}}\int_0^\infty
\int_0^1 y \Phi'\bigg(\frac{x-sy}{t^{1/(1+\alpha)}}\bigg)\,ds g(y)\, dy=:I.
$$
We must take into account the possibility that  the argument of the integrand  is negative. We recall that our datum $g$ has a compact support, we may assume that $\supp g\subset[0,R]$. Since we are interested in the time assymptotics we may consider only $t> R^{1+\alpha}.$
We will come up with different estimates for $|v^-(x,t)-m \sE_t(x)|$ when $x<2R$ and $x\ge 2R$.

Let us consider $x<2R$. Then we notice
$$
\bigg| \Phi'\bigg(\frac{x-sy}{t^{1/(1+\alpha)}}\bigg)\bigg|\le C_1(\alpha).
$$
Indeed, if $\xi$ denotes the argument of $\Phi'$, then
Lemma \ref{l4} yields $|\Phi'(\xi)| \le \frac{|\xi|^\alpha}{\Gamma(1+\alpha)}\le 1$, when  $|\xi|\le 1$ and $|\Phi'(\xi)| \le \frac {C_1(\alpha)}{|\xi|}\le C_1(\alpha)$ for $|\xi|>1$. Hence
$$
|v^-(x,t)-  m \sE_t(x)|\le C_1(\alpha), \qquad\hbox{for }x\le \max\{2R, t^{1/(1+\alpha)}\}.
$$
When $x>\max\{2R, t^{1/(1+\alpha)}\}$, then for $s\in [0,1]$ and $y\in[0,R]$ we have
$|x-sy|/t^{1/(1+\alpha)}>1$. Hence,
$$
\bigg| \Phi'\bigg(\frac{x-sy}{t^{1/(1+\alpha)}}\bigg)\bigg|\le
t^{1/(1+\alpha)} \frac {C_1(\alpha)}{x-sy}.
$$
As a result, we have
\begin{align*}
|I| & \le  \frac{a_0 C_1(\alpha)}{t^{1/(1+\alpha)}}\int_0^\infty
\int_0^1  \frac y{x-sy}\,ds |g(y)|\, dy
\le    
\frac{a_0 C_1(\alpha)}{t^{1/(1+\alpha)}}\int_0^\infty \frac y{x-y} |g(y)|\,dy \\ &
\le  \frac{ 2a_0 C_1(\alpha)}{t^{1/(1+\alpha)}}\int_0^\infty \frac y{x} |g(y)|\,dy ,
\end{align*}
where we also used $1/(x-y) \le 2/x$ for $x> 2R.$
\end{proof}

We are now ready for {\it the proof of Theorem \ref{t1}.} 
We are going to establish the estimates for  $v^\pm - m \sE_t$.
For this purpose we will use the above Lemma and the following Marcinkiewicz Interpolation Theorem:
\begin{proposition}(see \cite[Theorem 6.0.2]{simon}, \cite[Theorem 1]{zygmund})\label{t-im}\\
Let $(M.\Sigma,\mu)$ and $(N,\Xi,\nu)$ be two $\sigma$-finite measure spaces. Let $1\le p_0\le q_0\le\infty$, $1\le p_1\le q_1\le \infty$ and 
$T:L^{p_0}(M, \mu)+L^{p_1}(M,\mu)$ into the $\Xi$-measurable functions on $N$ so that $T$ is of the weak  types $(p_0,q_0)$ and  $(p_1,q_1)$ with constants $C_0$ and $C_1$, respectively. Then, for any $\theta\in (0,1)$ there is a constant $C_\theta$ depending only on $\theta,$ $p_j$, $q_j$, $C_j$, $j=0,1$, so that
$$
\| Tf \|_{q_\theta} \le K C_0^{1-\theta}C_1^\theta \| f \|_{p_\theta},
$$
where 
$$
\frac 1{p_\theta} = \frac\theta{p_1} + \frac{1-\theta}{p_0},\qquad \frac 1{q_\theta} = \frac\theta{q_1} + \frac{1-\theta}{q_0}
$$
and $K$ is independent of $f$, it is bounded if $p_0$, $p_1,$ $q_0$, $q_1$ are fixed and $\theta$ stays away from $0$ and $1$.
\end{proposition}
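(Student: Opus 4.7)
The plan is the classical proof of Marcinkiewicz interpolation via the distribution-function (layer-cake) representation combined with a level-set decomposition of $f$ tailored to each threshold $\lambda$. The first step is to write
$$\|Tf\|_{q_\theta}^{q_\theta} = q_\theta \int_0^\infty \lambda^{q_\theta-1} \nu(\{|Tf|>\lambda\})\, d\lambda,$$
which reduces the theorem to an estimate on the distribution function $\lambda \mapsto \nu(\{|Tf|>\lambda\})$ that can then be integrated against $\lambda^{q_\theta-1}$.

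Second, for each fixed $\lambda>0$, I would split $f = f_0^\lambda + f_1^\lambda$, with $f_0^\lambda = f\chi_{\{|f|>A\lambda^r\}}$, where $A>0$ and the scaling exponent $r>0$ are parameters to be calibrated later so that the final right-hand constant works out to $C_0^{1-\theta}C_1^\theta$. Using (quasi-)sublinearity of $T$, one has
$$\nu(\{|Tf|>\lambda\}) \le \nu(\{|Tf_0^\lambda|>\lambda/2\}) + \nu(\{|Tf_1^\lambda|>\lambda/2\}),$$
and the two weak-type hypotheses bound each piece by $(2C_0\|f_0^\lambda\|_{p_0}/\lambda)^{q_0}$ and $(2C_1\|f_1^\lambda\|_{p_1}/\lambda)^{q_1}$, respectively. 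The point of choosing the decomposition at the level $A\lambda^r$ is that $f_0^\lambda$ automatically sits in $L^{p_0}$ (with controlled norm in terms of $f$) while $f_1^\lambda$ sits in $L^{p_1}$, even though $f$ itself is only assumed to lie in $L^{p_\theta}$.

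Third, I would plug these bounds into the layer-cake integral and apply Fubini to swap the order of integration in $\lambda$ and $x$. The inner $\lambda$-integrals take the form $\int |f(x)|^{p_i}\bigl(\int \lambda^{q_\theta - q_i - 1}\,d\lambda\bigr) d\mu(x)$, where the range of $\lambda$ is dictated by the indicator $\chi_{\{|f(x)|>A\lambda^r\}}$. Choosing $r$ so that the identities $\tfrac{1}{p_\theta}=\tfrac{1-\theta}{p_0}+\tfrac{\theta}{p_1}$ and $\tfrac{1}{q_\theta}=\tfrac{1-\theta}{q_0}+\tfrac{\theta}{q_1}$ align the exponents, the two resulting pointwise powers of $|f(x)|$ combine to exactly $|f(x)|^{p_\theta}$; the $\lambda$-integrals reduce to convergent beta-function-type constants (depending only on $\theta$ and the $p_j, q_j$), while tuning $A = (C_0/C_1)^{\beta}$ for an appropriate $\beta$ produces the mixed factor $C_0^{1-\theta}C_1^\theta$.

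The main obstacle is the endpoint bookkeeping. When $q_0=\infty$ (so that the weak $(p_0,\infty)$ hypothesis is the strong bound $\|Tf\|_\infty\le C_0\|f\|_{p_0}$), the corresponding term is no longer integrable in $\lambda$, and one must instead calibrate $A$ so that $\|Tf_0^\lambda\|_\infty \le \lambda/2$ exactly, which kills that piece of the distribution-function estimate outright; a symmetric modification handles $q_1=\infty$. A second technical subtlety is the implicit hypothesis $p_0 \ne p_1$: if $p_0=p_1$ the level-set decomposition no longer separates the two norms, and one must argue directly from the two weak-type bounds on the same domain space. Finally, one must track that the constant $K$ remains bounded as $\theta$ stays away from $0$ and $1$, which follows because the beta-type integrals above only diverge at the endpoints $\theta\in\{0,1\}$.
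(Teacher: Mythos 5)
The paper does not actually prove this proposition: it is quoted from Simon and Zygmund, so there is no internal proof to compare against. Your outline is the standard distribution-function proof found in those references, and the overall strategy (layer-cake formula, $\lambda$-dependent splitting of $f$, calibration of the cut $A$ and the exponent $r$ to produce the factor $C_0^{1-\theta}C_1^\theta$, separate treatment of the $q_i=\infty$ endpoints) is the right one. You also correctly flag that when $p_0=p_1$ the level-set splitting degenerates; note that this is precisely the case the paper needs ($p_0=p_1=1$, $q_0=1$, $q_1=\infty$), where one simply bounds $\nu(\{|Tf|>\lambda\})$ by the minimum of the two weak-type bounds applied to $f$ itself and integrates that minimum against $\lambda^{q_\theta-1}$.

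There is, however, one genuine gap in your third step. You write the resulting term as $\int|f(x)|^{p_i}\bigl(\int\lambda^{q_\theta-q_i-1}\,d\lambda\bigr)d\mu(x)$, which tacitly assumes that $\|f_i^\lambda\|_{p_i}^{q_i}$ is itself an integral of $|f|^{p_i}$ over a level set, i.e. that $q_i=p_i$. The hypotheses allow $p_i<q_i$, in which case $\|f_i^\lambda\|_{p_i}^{q_i}=\bigl(\int_{\{\cdot\}}|f|^{p_i}\,d\mu\bigr)^{q_i/p_i}$ carries an outer power strictly greater than one, and Fubini cannot be applied directly to swap the $\lambda$- and $x$-integrations. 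The standard repair is Minkowski's integral inequality (equivalently a Hardy-type inequality) in the $\lambda$-variable with exponent $q_i/p_i\ge 1$; this is exactly where the hypothesis $p_j\le q_j$ enters the argument, and the fact that your sketch never invokes that hypothesis is the symptom of the missing step. A smaller point: the proposition as transcribed omits the sublinearity of $T$, which your proof correctly uses at the very first splitting $\nu(\{|Tf|>\lambda\})\le\nu(\{|Tf_0^\lambda|>\lambda/2\})+\nu(\{|Tf_1^\lambda|>\lambda/2\})$; in the paper's application the operators $T^\pm g=v^\pm-m\sE_t$ are linear in $g$, so this is harmless there, but it should be stated as a hypothesis.
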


Lemma \ref{l5}
gives us two types of results. First we note that
$$
(M,\Sigma, \mu ) = (\bR_+, \sA, y\cL^1), \qquad
(N,\Sigma, \nu ) = (\bR_+, \sA, \cL^1), 
$$
where $\sA$ is the $\sigma$-field of Lebesgue measurable sets, $\cL^1$ is the one-dimensional Lebesgue measure. For a fixed $t>0$ we set
$T^\pm g = v^\pm(t, \cdot) - m\sE_t$. For a fixed $\theta$ we set
$$
p_0 = 1 = p_1,\qquad q_0=1,\ q_1 = \infty.
$$
We claim that $T^\pm$ are of type $(1,\infty)$ with constant $C_1 = C_2(\alpha) t^{-2/(1+\alpha)}$, i.e.,
$$
\| v^\pm -m \sE_t\|_{L^\infty} \le C_2(\alpha) t^{-2/(1+\alpha)} \| y g(\cdot)\|_{L^1} = C_2(\alpha) t^{-2/(1+\alpha)} \| g\|_{L^1_\mu}.
$$
Indeed, for $x<t^{1/(1+\alpha)}$, we have
$$
| v^\pm(x,t) -m \sE_t(x)| \le C_2(\alpha) t^{-2/(1+\alpha)} \|  g \|_{L^1_\mu}.
$$
For $x\ge t^{1/(1+\alpha)}$ we have
$$
| (T^\pm g)(x)| \le C_2(\alpha) t^{-1/(1+\alpha)} x^{-1}\| y g(\cdot)\|_{L^1}\le C_2(\alpha) t^{-2/(1+\alpha)} \|  g\|_{L^1_\mu}.
$$

We are going to check that the mapping $T^\pm $ is of the weak type $(1,1)$, i.e. we will check that
$$
\sup_{\sigma>0} \sigma | \{ x: |T^\pm g(x)|>\sigma\}| \le C_2(\alpha) t^{-1/(1+\alpha)} \|g\|_{L^1_\mu}.
$$
Indeed, this immediately follows from Lemma \ref{l5}, moreover $C_0 = C_2(\alpha)  t^{-1/(1+\alpha)} $.

Now, we are going to use the Marcinkiewicz Interpolation Theorem, Proposition \ref{t-im}, with 
$$
p= 1, \qquad q= 1/(1-\theta)
$$
and 
$$
C_0^{1-\theta} C_1^\theta = t^{- (1+\theta)/(1+\alpha)}.
$$
Hence, for $\theta\in (0,1)$ we have
$$
\| v^\pm(x,t) -m \sE_t(x)\|_{L^{1/(1-\theta)}} \le K C_2(\alpha) t^{- (1+\theta)/(1+\alpha)},
$$
where $K$ is finite for $\theta$ different from $0$ and $1.$ Thus,
\begin{align*}
\| w_1 (\cdot, t)\|_{L^{1/(1-\theta)}}& \le 
\|v^+(x,t) -m \sE_t(x) \|_{L^{1/(1-\theta)}} + \|v^-(x,t) -m \sE_t(x) \|_{L^{1/(1-\theta)}} \\ &
\le 2KC_2(\alpha) t^{- (1+\theta)/(1+\alpha)} \| g\|_{L^1_\mu}.   
\end{align*}
Moreover,
\begin{align*}
\|w_2(\cdot,t)-2m\sE_t(\cdot)\|_{L^{1/(1-\theta)}}& \le 
\|v^+(x,t) -m \sE_t(x) \|_{L^{1/(1-\theta)}} + \|v^-(x,t) -m \sE_t(x) \|_{L^{1/(1-\theta)}} \\ &
\le 2K C_2(\alpha) t^{- (1+\theta)/(1+\alpha)} \| g\|_{L^1_\mu}.
\end{align*}
Part (c) follows immediately from Lemma \ref{l5}. \qed
\begin{remark}
The boundedness of the support of $g$ is used in the proof of the above theorem, when we consider $t> \max \supp g.$ It is not clear if this assumption is essential.
\end{remark}

\end{document}